\documentclass[a4paper,10pt]{article}
\usepackage{graphicx}

\usepackage{amsmath}
\usepackage{amssymb}
\usepackage{amsfonts}
\usepackage{amsthm}
\usepackage{mathtools}
\usepackage[binary-units=true]{siunitx}
\usepackage{booktabs}
\usepackage[affil-sl]{authblk}
\usepackage{todonotes}
\usepackage{accents}
\usepackage{bm}

\usepackage{dsfont}
\usepackage{url}
\usepackage{enumerate}
\usepackage[inline]{enumitem}
\usepackage{cite}

\usepackage{xspace}
\usepackage{tikz}
\usetikzlibrary{patterns,patterns.meta}
\usepackage{subfigure}

\usepackage[margin=3cm]{geometry}

\definecolor{darkred}{RGB}{150, 0, 0}
\definecolor{darkgreen}{RGB}{0, 150, 0}
\definecolor{darkblue}{RGB}{0, 0, 150}
\usepackage[breaklinks=true,colorlinks,citecolor=darkgreen,linkcolor=darkred,urlcolor=darkblue,bookmarks=false]{hyperref}

\newcommand{\R}{\mathds{R}}
\newcommand{\Z}{\mathds{Z}}

\newcommand{\lexgeq}{\ensuremath{\geq_{\mathrm{lex}}}}

\renewcommand{\dim}{d}
\newcommand{\boxdom}{D}
\newcommand{\ball}{\mathcal{B}}
\newcommand{\dist}{\mathrm{dist}}
\newcommand{\distval}{\delta}
\newcommand{\distvalalt}{q}
\DeclareMathOperator{\argmax}{argmax}

\newcommand{\minDC}{minDC\xspace}
\newcommand{\minDCs}{minDCs\xspace}

\newcommand{\lb}[1]{\underaccent\bar{#1}}
\newcommand{\ub}[1]{\bar{#1}}

\newcommand{\solver}[1]{\texttt{#1}\xspace}
\newcommand{\scip}{\solver{SCIP}}

\newcommand{\abs}[1]{\lvert #1 \rvert}
\newcommand{\define}{\coloneqq}
\newcommand{\norm}[1]{\| #1 \|_2}
\newcommand{\card}[1]{\abs{#1}}
\DeclareMathOperator{\conv}{conv}

\theoremstyle{plain}
\newtheorem{theorem}{Theorem}[section]
\newtheorem{lemma}[theorem]{Lemma}
\newtheorem{proposition}[theorem]{Proposition}

\newtheorem*{claim*}{Claim}

\newtheorem{observation}[theorem]{Observation}
\theoremstyle{definition}
\newtheorem{remark}[theorem]{Remark}

\begin{document}

\title{A computational comparison of handling distance constraints in MINLP}

\author[1]{Christopher Hojny}
\affil[1]{%
  Eindhoven University of Technology, Department of Mathematics and
  Computer Science, PO Box 513, 5600 MB Eindhoven, The Netherlands,
  c.hojny@tue.nl
}
\author[2]{Leo Liberti}
\affil[2]{%
  LIX CNRS Ecole Polytechnique, Institut Polytechnique de Paris, 91128
  Palaiseau, France, leo.liberti@polytechnique.edu
}

\maketitle

\begin{abstract}
  Minimum distance constraints (\minDCs) appear in many geometric optimization problems.
  They pose major challenges for mixed-integer nonlinear programming (MINLP) due to their reverse-convexity.
  We develop new algorithms for tightening variable bounds in general MINLPs with \minDCs.
  Because many such problems exhibit substantial symmetry, we further discuss an approach for handling rotation symmetries.
  In a computational study, we examine the performance of the various methods and determine the scenarios in which each approach demonstrates superiority.

  \textbf{Keywords}
  minimum distance constraints $\bullet$ rotation symmetry $\bullet$ bound tightening $\bullet$ cutting planes
\end{abstract}

\section{Introduction}

We consider mixed-integer nonlinear programs (MINLP)
\[
  \min_{x \in \R^n}\{f(x) : g(x) \leq 0 \text{ and } x_i \in \Z \text{ for } i \in I\},
\]
where~$f\colon \R^n \to \R$, $g\colon \R^n \to \R^m$, $I \subseteq [n] \define \{1,\dots,n\}$, and some constraints are minimum distance constraints.
A \emph{minimum distance constraint (\minDC)} is a constraint~$\norm{y - z}^2 \geq \distval^2$, where~$y,z \in \R^\dim$ are~$\dim$-dimensional subvectors of~$x$ and~$\distval \geq 0$ is either a variable or real number.
Minimum distance constraints require that two points in~$\dim$-dimensional space have Euclidean distance at least~$\distval$.
They arise in many applications, e.g., the sphere packing problem~\cite{LocatelliRaber2002}, kissing number problem~\cite{KucherenkoEtAl2007}, or obnoxious facility location problem~\cite{KusnetsovSahinidis2025}.
Since \minDCs are highly nonconvex, solving MINLP problems involving such constraints is extremely difficult.
For example, for the circle packing problem, where the goal is to find the largest radius such that~$k$ nonoverlapping circles fit into a unit circle, the MINLP solver \scip only finds a provably optimal solution within two hours when~$k \leq 8$.

For many applications, the effective treatment of \minDCs has been investigated.
Most prominently, many variants of the circle packing problem have been studied.
While many heuristic solution approaches have been suggested, see, e.g., \cite{LaiEtAl2022} for an excellent overview for the problem of packing circles into a bigger circle, few exact methods exist for circle packing.
One of the first such methods is~\cite{LocatelliRaber2002}, which suggests a branch-and-bound (BB) algorithm where the variables are the center points of the circles.
If~$\boxdom_y, \boxdom_z \subseteq \R^2$ are bounding boxes for the center points~$y$ and~$z$ in a \minDC, it was observed that the box~$\boxdom_y$ can be replaced by~$\boxdom_y \setminus \hat{\boxdom} \subseteq \boxdom_y$ if the points in~$\hat{\boxdom}$ are ``too close'' to~$\boxdom_z$.
This observation is used to tighten variable bounds and thus accelerate the BB algorithm.
Since this approach is vulnerable to numerical errors, \cite{MarkotCsendes2006} adapts the ideas of~\cite{LocatelliRaber2002} to interval arithmetic, in order to find guaranteed feasible solutions.
Extending on the ideas of~\cite{LocatelliRaber2002}, \cite{pecsdam,Costa2013} derive structural properties of solutions of circle and point packing problems which enable a more effective pruning within BB.
In~\cite{WangGounaris2021}, an alternative BB algorithm is suggested, where branching is not performed on the~$y$- and~$z$-variables, but on the differences~$\Delta_i = y_i - z_i$, $i \in \{1,2\}$.
Exploiting bounds on~$\Delta_i$, intersection cuts and three feasibility-based bound tightening (FBBT) techniques are derived.

For general MINLPs with \minDCs, however, the literature is scarcer and the previously mentioned results, except for the bounding box idea of~\cite{LocatelliRaber2002}, can not be applied.
However, to the best of our knowledge, the effectiveness of this idea has not been investigated for general MINLP problems.
Moreover, to handle \minDCs in general MINLPs, the authors of~\cite{KusnetsovSahinidis2025} considered the case of \minDCs~$\norm{y - z}^2 \geq \distval^2$ where one vector, say~$z$, is a fixed point~$p$.
When there are multiple \minDCs~$\norm{y - p^1}^2 \geq \distval^2_1$, \dots, $\norm{y - p^k}^2 \geq \distval^2_k$ and~$y$ is contained in a bounding box~$\boxdom \subseteq \R^2$, they showed how to efficiently compute the convex hull of all~$y \in \boxdom$ satisfying the~$k$ \minDCs and exploit facet defining inequalities within spatial branch-and-bound.

In this article, we complement these results by considering the case when both~$y$ and~$z$ are variable vectors.
Our aim is to devise new algorithms for handling \minDCs and to computationally investigate scenarios in which the different methods demonstrate superiority.
To this end, Sec.~\ref{sec:mindc} explains the method given in~\cite{LocatelliRaber2002} in more detail and suggests new methods for handling \minDCs.
Since many problems involving \minDCs also admit many symmetries, it is inevitable to handle these symmetries in branch-and-bound because otherwise the performance of MINLP solvers deteriorates drastically, cf.~\cite{Margot2010,PfetschRehn2019}.
While many sophisticated symmetry handling methods exist, for \emph{rotation symmetries} arising in applications like the kissing number problem or packing spheres into a bigger sphere, only simple techniques have been developed to the best of our knowledge.
We show that, despite their simplicity, these techniques are the strongest possible ones for the class of Givens rotations (Sec.~\ref{sec:sym}).
Sec.~\ref{sec:num} concludes this article by a numerical comparison of the different algorithms.

\paragraph{Notation.}
For a variable~$x$, we denote by~$\lb{x}$ and~$\ub{x}$ lower and upper bounds on~$x$, respectively.
Moreover, for~$r > 0$ and~$p \in \R^\dim$, the set~$\ball_r(p) = \{ x \in \R^\dim : \norm{x - p} < r\}$ denotes the open Euclidean ball of radius~$r$ centered at~$p$.
By~$\partial\ball_{r}(p)$ we denote the boundary of~$\ball_r(p)$.
The convex hull of a set~$S \subseteq \R^\dim$ is denoted by~$\conv(S)$.
For a polyhedron~$P$, we denote by~$V(P)$ its vertex set.

\section{Handling minimum distance constraints}
\label{sec:mindc}

Throughout this section, we consider minimum distance constraints
\begin{equation}
  \label{eq:mindc}
  \sum_{i = 1}^\dim (y_i - z_i)^2 \geq \distval^2,
\end{equation}
where~$y,z \in \R^\dim$ are variables with respective domains~$\boxdom_y, \boxdom_z \subseteq \R^\dim$, and~$\distval$ is a nonnegative variable.
When solving MINLPs via spatial branch-and-bound, variable domains are typically real intervals.
We therefore assume that the domains~$\boxdom_y$ and~$\boxdom_z$ are hyperrectangles, which we refer to as boxes for brevity.

In the following, we explain the ideas of Locatelli and Raber~\cite{LocatelliRaber2002} for shrinking the box domains~$\boxdom_y, \boxdom_z$ based on \minDCs (Section~\ref{sec:locatelli}).
Afterwards, we describe a simpler algorithm for shrinking box domains (Section~\ref{sec:algOne}), present simple cutting planes (Section~\ref{sec:algOneCut}), and develop novel algorithms for shrinking box domains based on pairs of \minDCs (Section~\ref{sec:algTwo}).

\subsection{A single minimum distance constraint}
\subsubsection{Locatelli and Raber's algorithm~\cite{LocatelliRaber2002}}
\label{sec:locatelli}

The approach of~\cite{LocatelliRaber2002} proceeds in two steps.
\begin{enumerate}
\item Weaken the \minDC~\eqref{eq:mindc} to~$\sum_{i = 1}^\dim (y_i - z_i)^2 \geq \lb{\distval}^2$, i.e., replace the variable right-hand side~$\distval^2$ by the constant lower bound~$\lb{\distval}^2$.
\item Find a region~$\hat{\boxdom} \subseteq \boxdom_y$ such that, for every~$y \in \hat{\boxdom}$, one has~$\max_{z \in \boxdom_z} \norm{y - z}^2 < \lb{\distval}^2$.
\end{enumerate}
Clearly, every~$(y,z) \in \hat{\boxdom} \times \boxdom_z$ violates the weakened constraint~\eqref{eq:mindc} (i.e., with~$\distval$ replaced by~$\lb{\distval}$), and therefore~$\boxdom_y$ can be replaced by~$\boxdom_y \setminus \hat{\boxdom}$.
The crucial step of this procedure is Step~2.
Although Locatelli and Raber discuss it only for~$\dim = 2$, their results immediately apply to the general case~$\dim \geq 2$.
Their ideas can be summarized as follows.

For every~$y \in \boxdom_y$, note that~$\max_{z \in \boxdom_z} \norm{y - z}^2 = \max_{z \in V(\boxdom_z)} \norm{y - z}^2$ because~$\norm{y - z}^2$ is convex in~$z$.
A point~$y \in \boxdom_y$ can thus only be removed from~$\boxdom_y$ if its Euclidean distance to all~$z \in V(\boxdom_z)$ is less than~$\lb{\distval}$, i.e., $y \in \bigcap_{z \in V(\boxdom_z)} \ball_{\lb{\distval}}(z) \eqqcolon C(\boxdom_z, \lb{\delta})$.
Consequently, the largest region~$\hat{\boxdom}$ that can be removed from~$\boxdom_y$ is
\[
  \hat{\boxdom} = \boxdom_y \cap C(\boxdom_z, \lb{\delta}),
\]
see Fig.~\ref{fig:locatelli} for an illustration, where both~$y$ and~$z$ are points in~$\R^2$ that need to be placed in their respective domains while satisfying a \minDC.

Since~$\boxdom_y \setminus \hat{\boxdom}$ can be nonconvex (the light blue region in Fig.~\ref{fig:locatelli}), Locatelli and Raber suggest to use a smaller set~$\bar{\boxdom}$, which is constructed as follows.
\begin{enumerate}[label=(\roman*)]
\item For every~$z \in V(\boxdom_z)$, one computes the set~$J_z$ consisting of all intersection points of~$\ball_{\lb{\distval}}(z)$ with the edges of~$\boxdom_y$.
  Let~$J = V(\boxdom_y) \cup \bigcup_{z \in V(\boxdom_z)} J_z$
\item Select~$\bar{\boxdom} = \conv(\bar{J})$, where~$\bar{J} = \{x \in J : \max_{z \in V(\boxdom_z)} \norm{x - z}^2 < \lb{\distval}^2\}$ contains all vertices of~$\boxdom_y$ and intersection points of~$C(\boxdom_z, \lb{\delta})$ with the edges of~$\boxdom_y$ that are at distance less than~$\lb{\distval}$ to each vertex of~$\boxdom_z$.
\end{enumerate}
Although this choice guarantees that the set~$\boxdom_y \setminus \bar{\boxdom}$ is convex and remains polyhedral, it has two downsides: maintaining an exact representation of~$\boxdom_y \setminus \bar{\boxdom}$ potentially requires a lot of memory, and computing a facet description of~$\boxdom_y \setminus \bar{\boxdom}$ is costly in arbitrary dimension.
Locatelli and Raber therefore suggest replacing~$\bar{\boxdom}$ by a box~$\boxdom'$ that has a common facet with~$\boxdom_y$, see Fig.~\ref{fig:locatelliB}.
This approach ensures that~$\boxdom_y \setminus \boxdom'$ remains a box.
\begin{observation}
  Since~$\card{V(\boxdom_y)}, \card{V(\boxdom_z)} \in O(2^\dim)$ and a~$\dim$-dimensional box has~$\dim 2^{\dim - 1}$ edges, the set~$J$ has size~$\card{J} \in O(\card{V(\boxdom_z)} \cdot \dim 2^{\dim - 1} + \card{V(\boxdom_y)}) = O(\dim 2^{2\dim - 1})$.
  Checking containment of~$x \in J$ in~$\bar{J}$ takes~$O(\dim \card{V(\boxdom_z)})$ time because computing~$\norm{x - z}$ requires~$O(\dim)$ time for a fixed~$z \in \boxdom_z$.
  Consequently, the entire procedure of Locatelli and Raber takes $O(\card{J} \cdot \dim \card{V(\boxdom_z)}) =O(\dim^2 2^{3\dim - 1})$ time.
\end{observation}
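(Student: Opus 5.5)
This is a counting argument, and I will follow the three sentences of the observation in order.

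\emph{Size of~$J$.} A box in~$\R^\dim$ (allowing degenerate boxes) has at most~$2^\dim$ vertices, one for each choice of lower or upper bound per coordinate. Hence~$\card{V(\boxdom_y)}, \card{V(\boxdom_z)} \le 2^\dim$.

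The edges of~$\boxdom_y$ are obtained by choosing one free coordinate ($\dim$ choices) and fixing the remaining~$\dim-1$ coordinates at a bound ($2^{\dim-1}$ choices). This gives~$\dim 2^{\dim-1}$ edges.

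For a fixed~$z$, the sphere~$\partial\ball_{\lb{\distval}}(z)$ meets a segment in at most two points. Substituting a parametrization of the segment into~$\norm{x-z}^2 = \lb{\distval}^2$ yields a univariate quadratic, so it has at most two roots. (Here "intersection with~$\ball$" is read as intersection with its boundary.) Therefore
\[
\card{J_z} \le 2\dim 2^{\dim-1},
\]
and
\[
\card{J} \le \card{V(\boxdom_y)} + \sum_{z \in V(\boxdom_z)} \card{J_z} \in O\bigl(\card{V(\boxdom_z)}\,\dim 2^{\dim-1} + \card{V(\boxdom_y)}\bigr).
\]
Plugging in the~$2^\dim$ bounds gives~$O(\dim 2^{2\dim-1} + 2^\dim) = O(\dim 2^{2\dim-1})$.

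\emph{Cost of a containment test.} For a fixed~$x \in J$, evaluating~$\norm{x-z}^2$ takes~$\dim$ subtractions, squarings and additions, i.e.~$O(\dim)$ time. Deciding~$x \in \bar J$ means checking~$\norm{x-z}^2 < \lb{\distval}^2$ for every~$z \in V(\boxdom_z)$, equivalently comparing the maximum with~$\lb{\distval}^2$. This costs~$O(\dim\card{V(\boxdom_z)})$.

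\emph{Total running time.} Summing over all of~$J$ gives
\[
O\bigl(\card{J}\cdot \dim \card{V(\boxdom_z)}\bigr) = O\bigl(\dim 2^{2\dim-1}\cdot \dim 2^\dim\bigr) = O\bigl(\dim^2 2^{3\dim-1}\bigr).
\]

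It remains to check that the other steps do not dominate.
\begin{itemize}
\item Generating each~$J_z$ costs~$O(\dim)$ per (vertex, edge) pair, for~$O(\dim^2 2^{2\dim-1})$ in total. This is within the bound.
\item Enumerating~$V(\boxdom_z)$ and the edges of~$\boxdom_y$ costs~$O(\dim 2^\dim)$ each, which is also negligible.
\end{itemize}
The observation counts only up to the construction of~$\bar J$, so forming~$\conv(\bar J)$ or the replacement box~$\boxdom'$ is not charged here. If desired, computing the box~$\boxdom'$ from~$\bar J$ is a linear scan costing~$O(\dim\card{\bar J})$, which is dominated.

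The only point needing care is the edge-intersection count, namely that each edge meets each sphere at most twice. The quadratic argument above settles this, and everything else is bookkeeping.
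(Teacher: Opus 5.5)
Your proposal is correct and follows the same counting argument as the paper's observation: the vertex and edge counts bound $\card{J}$, each distance evaluation costs $O(d)$, and multiplying out gives $O(d^2 2^{3d-1})$. You also make explicit two points the paper leaves implicit: that each edge meets each sphere in at most two points, and that computing the intersection points themselves costs only $O(d^2 2^{2d-1})$, so this step does not dominate.
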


\begin{figure}
  \centering
  \subfigure[$\boxdom_y$ becomes general polyhedron.\label{fig:locatelliA}]{
    \begin{tikzpicture}
      \clip (-0.2,-0.5) rectangle (5.4,2.5);
      \draw[fill=blue!30] (0,-0.15) rectangle (3,1.85);
      \draw[fill=red!30] (3.2,0.2) rectangle (4.0,1.8);
      \node (Y) at (0.3,0.1) {\textcolor{blue}{$\boxdom_y$}};
      \node (Z) at (3.5,0.4) {\textcolor{red}{$\boxdom_z$}};

      \node (R1) at (3.2,0.2) [circle,fill=red,minimum size=2mm,inner sep=0mm] {};
      \node (R2) at (4.0,0.2) [circle,fill=orange,minimum size=2mm,inner sep=0mm] {};
      \node (R3) at (4.0,1.8) [circle,fill=cyan,minimum size=2mm,inner sep=0mm] {};
      \node (R4) at (3.2,1.8) [circle,fill=darkgreen,minimum size=2mm,inner sep=0mm] {};

      \draw[dashed,draw=red] (R1) circle (2.3cm);
      \draw[dashed,draw=orange] (R2) circle (2.3cm);
      \draw[dashed,draw=cyan] (R3) circle (2.3cm);
      \draw[dashed,draw=darkgreen] (R4) circle (2.3cm);
      \clip (0,-0.15) rectangle (3,1.85);
      \clip (R2) circle (2.3cm);
      \clip (R3) circle (2.3cm);
      \draw[dashed,draw=orange,fill=blue!60] (R2) circle (2.3cm);
      \draw[dashed,draw=cyan,fill=blue!60] (R3) circle (2.3cm);
      \node (DD) at (2.2,1) {\textcolor{white}{$\hat{D}$}};

      \draw[-,very thick,draw=blue,pattern={Lines[angle=45,line width=0.5mm]},pattern color=blue] (2.4,1.85) -- (2.8,-0.15) -- (3,-0.15) -- (3,1.85) -- cycle;
      \node (DD) at (2.75,1.5) {\textcolor{white}{$\bar{D}$}};
    \end{tikzpicture}
  }
  \qquad
  \subfigure[$\boxdom_y$ remains a box.\label{fig:locatelliB}]{
    \begin{tikzpicture}
      \clip (-0.2,-0.5) rectangle (5.4,2.5);
      \draw[fill=blue!30] (0,-0.15) rectangle (3,1.85);
      \draw[fill=red!30] (3.2,0.2) rectangle (4.0,1.8);
      \node (Y) at (0.3,0.1) {\textcolor{blue}{$\boxdom_y$}};
      \node (Z) at (3.5,0.4) {\textcolor{red}{$\boxdom_z$}};

      \node (R1) at (3.2,0.2) [circle,fill=red,minimum size=2mm,inner sep=0mm] {};
      \node (R2) at (4.0,0.2) [circle,fill=orange,minimum size=2mm,inner sep=0mm] {};
      \node (R3) at (4.0,1.8) [circle,fill=cyan,minimum size=2mm,inner sep=0mm] {};
      \node (R4) at (3.2,1.8) [circle,fill=darkgreen,minimum size=2mm,inner sep=0mm] {};

      \draw[dashed,draw=red] (R1) circle (2.3cm);
      \draw[dashed,draw=orange] (R2) circle (2.3cm);
      \draw[dashed,draw=cyan] (R3) circle (2.3cm);
      \draw[dashed,draw=darkgreen] (R4) circle (2.3cm);
      \clip (0,-0.15) rectangle (3,1.85);
      \clip (R2) circle (2.3cm);
      \clip (R3) circle (2.3cm);
      \draw[dashed,draw=orange,fill=blue!60] (R2) circle (2.3cm);
      \draw[dashed,draw=cyan,fill=blue!60] (R3) circle (2.3cm);

      \draw[-,very thick,draw=blue,pattern={Lines[angle=45,line width=0.5mm]},pattern color=blue] (2.8,1.85) -- (2.8,-0.15) -- (3,-0.15) -- (3,1.85) -- cycle;
      \node (DD) at (2.82,1.5) {$\textcolor{white}{D'}$};
    \end{tikzpicture}
  }
  \caption{Illustration of the method by Locatelli and Raber to generate convex domains~$\boxdom_y$.
    In Figs.~\ref{fig:locatelliA} and \ref{fig:locatelliB}, the hatched area corresponds to~$\bar{\boxdom}$ and~$\boxdom'$, respectively.
  }\label{fig:locatelli}
\end{figure}

\subsubsection{A simple algorithm for shrinking box domains}
\label{sec:algOne}

Since the running time of the algorithm by Locatelli and Raber grows exponentially in~$\dim$, it is only practical for moderate values of~$\dim$.
We therefore present a much simpler algorithm that scales linearly in the dimension~$\dim$.
While the algorithm of Locatelli and Raber provably finds the largest box~$\boxdom'$ that can be removed from~$\boxdom_y$ such that~$\boxdom_y \setminus \boxdom'$ remains a box, our algorithm comes without such a guarantee.

For all~$i \in [\dim]$, let~$\dist_i = \max_{(y,z) \in \boxdom_y \times \boxdom_z} |y_i - z_i|$.
For all~$j \in [\dim]$, let~$\distvalalt_j = \lb{\distval}^2 - \sum_{i \in [d] \setminus \{j\}} \dist_i^2$.
\begin{lemma}
  \label{lem:simpleMindc}
  With respect to Eq.~\eqref{eq:mindc}, we have~$(y_j - z_j)^2 \geq \distvalalt_j$ for every~$j \in [\dim]$.
\end{lemma}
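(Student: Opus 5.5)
The plan is a direct rearrangement of the \minDC, combined with the coordinatewise bounds supplied by the definition of~$\dist_i$. Fix~$j \in [\dim]$ and an arbitrary feasible point, i.e., $(y,z) \in \boxdom_y \times \boxdom_z$ together with a value of~$\distval$ satisfying~\eqref{eq:mindc} and the bound~$\distval \geq \lb{\distval}$. The aim is to show~$(y_j - z_j)^2 \geq \distvalalt_j$ for this point; since the point is arbitrary, this gives the lemma.

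First, I weaken the right-hand side exactly as in Step~1 of Locatelli and Raber's approach. Because~$\distval \geq \lb{\distval} \geq 0$, we have~$\distval^2 \geq \lb{\distval}^2$, so~\eqref{eq:mindc} yields~$\sum_{i=1}^\dim (y_i - z_i)^2 \geq \lb{\distval}^2$. The nonnegativity of~$\lb{\distval}$ is needed here so that squaring preserves the inequality. This is harmless, since~$\distval$ is a nonnegative variable and its lower bound may be taken nonnegative.

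Second, I isolate coordinate~$j$:
\[
  (y_j - z_j)^2 \geq \lb{\distval}^2 - \sum_{i \in [\dim]\setminus\{j\}} (y_i - z_i)^2 .
\]
For each~$i \neq j$, the definition~$\dist_i = \max_{(y',z') \in \boxdom_y \times \boxdom_z} \abs{y'_i - z'_i}$ gives~$\abs{y_i - z_i} \leq \dist_i$, because~$(y,z)$ lies in~$\boxdom_y \times \boxdom_z$. Hence~$(y_i - z_i)^2 \leq \dist_i^2$. Substituting these bounds into the subtracted sum only decreases the right-hand side, which gives~$(y_j - z_j)^2 \geq \lb{\distval}^2 - \sum_{i \neq j} \dist_i^2 = \distvalalt_j$.

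I expect no real obstacle; the argument is elementary. The one point needing care is the interpretation of the statement: it holds for all points feasible for~\eqref{eq:mindc} within the current domains, and it is vacuous whenever~$\distvalalt_j \leq 0$.
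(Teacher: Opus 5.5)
Your proof is correct and matches the paper's argument: isolate the $j$-th term of \eqref{eq:mindc}, then bound $\distval^2$ below by $\lb{\distval}^2$ and each $(y_i - z_i)^2$ above by $\dist_i^2$. Your remark that $\lb{\distval} \geq 0$ is needed for $\lb{\distval}^2 \leq \distval^2$ makes explicit a point the paper leaves implicit.
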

\begin{proof}
  For any dimension index~$j \in [\dim]$, consider Eq.~\eqref{eq:mindc} in the following form:
  \[
    \sum_{i \in [d] \setminus \{j\}} (y_i - z_i)^2 + (y_j - z_j)^2 \geq \distval^2.
  \]
  This yields~$(y_j - z_j)^2 \geq \distval^2 - \sum_{i \in [d] \setminus \{j\}} (y_i - z_i)^2$.
  Now, we have~$\lb{\distval}^2 \leq \distval^2$, as well as~$(y_i - z_i)^2 \leq \dist_i^2$, which implies that
  \[
    \distval^2 - \sum_{i \in [d] \setminus \{j\}} (y_i - z_i)^2
    \geq
    \lb{\distval}^2 - \sum_{i \in [d] \setminus \{j\}} \dist_i^2
    =
    \distvalalt_j,
  \]
  as claimed.
\end{proof}
Based on this observation, we can devise some simple rules for reducing
variable domains, see Fig.~\ref{fig:illustratePropFast} for an illustration.
\begin{proposition}
  \label{prop:prop1fast}
  Consider a single \minDC~\eqref{eq:mindc} with
  variable domains~$\boxdom_y$ and~$\boxdom_z$, and let~$j \in [n]$.
  Let~$(y',z') \in \boxdom_y \times \boxdom_z$ satisfy~\eqref{eq:mindc}.
  If~$\distvalalt_j > 0$, then
  \begin{enumerate}
  \item if~$\ub{y}_j \leq \lb{z}_j$, we have
    \begin{align*}
      y'_j &\leq \min\{\ub{y}_j, \ub{z}_j - \sqrt{\distvalalt_j}\},\\
      z'_j &\geq \max\{\lb{z}_j, \lb{y}_j + \sqrt{\distvalalt_j}\},
    \end{align*}
    and symmetrically if~$\ub{z}_j \leq \lb{y}_j$;
  \item if~$\lb{z}_j \leq \lb{y}_j \leq \ub{y}_j \leq \ub{z}_j$, then~$\ub{y}_j - \lb{z}_j \geq \sqrt{\distvalalt_j}$ or~$\ub{z}_j - \lb{y}_j \geq \sqrt{\distvalalt_j}$,
    and symmetrically if~$\lb{y}_j \leq \lb{z}_j \leq \ub{z}_j \leq \ub{y}_j$;
  \item if~$\lb{z}_j < \lb{y}_j \leq \ub{z}_j < \ub{y}_j$, we have
    \begin{align*}
      \ub{z}_j - \lb{y}_j < \sqrt{\distvalalt_j}
      \text{ and }
      \ub{y}_j - \ub{z}_j < \sqrt{\distvalalt_j}
      \implies&
                z'_j \leq \min\{\ub{z}_j, \ub{y}_j - \sqrt{\distvalalt_j}\};\\
      \lb{y}_j - \lb{z}_j < \sqrt{\distvalalt_j}
      \text{ and }
      \ub{z}_j - \lb{y}_j < \sqrt{\distvalalt_j}
      \implies&
                y'_j \geq \max\{\lb{y}_j, \lb{z}_j + \sqrt{\distvalalt_j}\},
    \end{align*}
    and symmetrically if~$\lb{y}_j < \lb{z}_j \leq \ub{y}_j < \ub{z}_j$.
  \end{enumerate}
\end{proposition}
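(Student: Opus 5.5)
By Lemma~\ref{lem:simpleMindc}, every feasible $(y',z')$ satisfies $(y'_j - z'_j)^2 \geq \distvalalt_j$. When $\distvalalt_j > 0$, this is equivalent to the disjunction
\[
  y'_j - z'_j \geq \sqrt{\distvalalt_j} \quad\text{or}\quad z'_j - y'_j \geq \sqrt{\distvalalt_j}.
\]
The whole proof reduces to the one-dimensional question of which of these two alternatives is compatible with the interval constraints $y'_j \in [\lb{y}_j,\ub{y}_j]$ and $z'_j \in [\lb{z}_j,\ub{z}_j]$. We exclude an alternative whenever it would force $y'_j$ or $z'_j$ outside its interval, and then derive bounds from the surviving alternative. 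The ``symmetric'' subcases follow by swapping the roles of $y$ and $z$, since both the constraint and $\distvalalt_j$ are symmetric in $y$ and $z$.

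\emph{Case 1} ($\ub{y}_j \leq \lb{z}_j$). Here $y'_j \leq \ub{y}_j \leq \lb{z}_j \leq z'_j$, so $y'_j - z'_j \leq 0 < \sqrt{\distvalalt_j}$. The first alternative is therefore impossible, and we get $z'_j - y'_j \geq \sqrt{\distvalalt_j}$. Combining this with $z'_j \leq \ub{z}_j$ gives $y'_j \leq \ub{z}_j - \sqrt{\distvalalt_j}$. Combining it with $y'_j \geq \lb{y}_j$ gives $z'_j \geq \lb{y}_j + \sqrt{\distvalalt_j}$. Intersecting with the original bounds yields the stated min/max expressions.

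\emph{Case 2} ($\lb{z}_j \leq \lb{y}_j \leq \ub{y}_j \leq \ub{z}_j$). One of the two alternatives must hold for the feasible point $(y',z')$. If $y'_j - z'_j \geq \sqrt{\distvalalt_j}$, then $\ub{y}_j - \lb{z}_j \geq y'_j - z'_j \geq \sqrt{\distvalalt_j}$. Otherwise $\ub{z}_j - \lb{y}_j \geq z'_j - y'_j \geq \sqrt{\distvalalt_j}$. This is exactly the claimed disjunction. It is a consequence of the existence of a feasible point rather than a tightening.

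\emph{Case 3} ($\lb{z}_j < \lb{y}_j \leq \ub{z}_j < \ub{y}_j$). For the first implication, assume $\ub{z}_j - \lb{y}_j < \sqrt{\distvalalt_j}$. Then $z'_j - y'_j \leq \ub{z}_j - \lb{y}_j < \sqrt{\distvalalt_j}$ excludes the second alternative. Hence $y'_j - z'_j \geq \sqrt{\distvalalt_j}$, so $z'_j \leq y'_j - \sqrt{\distvalalt_j} \leq \ub{y}_j - \sqrt{\distvalalt_j}$. This gives the bound on $z'_j$.

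The second implication is where I expect the main obstacle, namely matching the hypotheses as printed. Both implications list $\ub{z}_j - \lb{y}_j < \sqrt{\distvalalt_j}$, and under that hypothesis the surviving alternative is again $y'_j - z'_j \geq \sqrt{\distvalalt_j}$. This gives $y'_j \geq z'_j + \sqrt{\distvalalt_j} \geq \lb{z}_j + \sqrt{\distvalalt_j}$, which is the stated lower bound on $y'_j$. So each implication really uses only the hypothesis $\ub{z}_j - \lb{y}_j < \sqrt{\distvalalt_j}$, and the other listed condition is superfluous for validity. It presumably serves only to indicate when the new bound is an actual tightening, e.g.\ when $\ub{y}_j - \ub{z}_j < \sqrt{\distvalalt_j}$ makes $\ub{y}_j - \sqrt{\distvalalt_j} < \ub{z}_j$. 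I would prove the implications under this weaker single hypothesis and remark that the extra conditions merely guarantee strict improvement. Finally, I would take the min/max with the original bounds, since $z'_j \leq \ub{z}_j$ and $y'_j \geq \lb{y}_j$ hold trivially.
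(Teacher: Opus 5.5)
Your proof is correct. Case~1 is the same as the paper's argument, but in Cases~2 and~3 you take a more uniform route. You handle every case by splitting $\abs{y'_j - z'_j} \geq \sqrt{\distvalalt_j}$ into its two sign alternatives and ruling one out using interval endpoints.

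The paper argues differently in those two cases:
\begin{itemize}
\item In Case~2 it maximizes the convex function $(y_j - z_j)^2$ over the four vertices of $[\lb{y}_j,\ub{y}_j] \times [\lb{z}_j,\ub{z}_j]$. It then uses the containment $[\lb{y}_j,\ub{y}_j] \subseteq [\lb{z}_j,\ub{z}_j]$ to reduce to the vertices $(\lb{y}_j,\ub{z}_j)$ and $(\ub{y}_j,\lb{z}_j)$.
\item In Case~3 it uses a covering argument. The two hypotheses say that $\ub{z}_j$ is at distance less than $\sqrt{\distvalalt_j}$ from every point of $[\lb{y}_j,\ub{y}_j]$, and from this it concludes $z'_j \leq \ub{y}_j - \sqrt{\distvalalt_j}$.
\end{itemize}

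What each approach buys:
\begin{itemize}
\item The paper's viewpoint fits the Locatelli--Raber idea of removing points whose farthest point in the other domain is still too close. It also explains why the hypotheses are stated as they are: they are exactly the conditions under which the new bound is a strict reduction.
\item Your argument is shorter and more elementary. It also makes explicit the paper's ``hence'' step in Case~3, which needs every $z_j \in (\ub{y}_j - \sqrt{\distvalalt_j}, \ub{z}_j]$ to be excluded, not only $\ub{z}_j$.
\item Your argument gives slightly stronger statements. The disjunction in Case~2 holds without the containment assumption. Each implication in Case~3 needs only $\ub{z}_j - \lb{y}_j < \sqrt{\distvalalt_j}$ for validity.
\end{itemize}

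Your reading of the other hypothesis in each implication is right. It only guarantees strict tightening: $\ub{y}_j - \sqrt{\distvalalt_j} < \ub{z}_j$ in the first implication, and $\lb{z}_j + \sqrt{\distvalalt_j} > \lb{y}_j$ in the second.
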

\begin{figure}[t]
  \centering
  \subfigure[Case 1: dashed parts can be removed from the domains.\label{fig:illus1}]{
    \begin{tikzpicture}
      \clip (-0.75,-1) rectangle (3.25,1);
      \draw[very thick,red,dashed] (0,0) -- (1,0);
      \draw[very thick,blue,dashed] (1.5,0) -- (2.5,0);
      \draw[very thick,red] (0,0) -- (0.5,0);
      \draw[very thick,blue] (2,0) -- (2.5,0);
      \node (R1) at (0,0) [circle,fill=red,minimum size=1mm,inner sep=0mm] {};
      \node (B1) at (2.5,0) [circle,fill=blue,minimum size=1mm,inner sep=0mm] {};
      \draw[dashed,red] (R1) circle (2cm);
      \draw[dashed,blue] (B1) circle (2cm);
    \end{tikzpicture}
  }
  \quad
  \subfigure[Case 2: the blue interval is not too close to any of the
  endpoints of the red interval.\label{fig:illus2}]{
    \begin{tikzpicture}
      \clip (-0.75,-1) rectangle (3.75,1);
      \draw[very thick,blue] (0,0) -- (3,0);
      \draw[ultra thick,red] (0.5,0.05) -- (2.5,0.05);
      \node (R1) at (0,0) [circle,fill=blue,minimum size=1mm,inner sep=0mm] {};
      \node (R2) at (3,0) [circle,fill=blue,minimum size=1mm,inner sep=0mm] {};
      \draw[dashed,blue] (R1) circle (2cm);
      \draw[dashed,blue] (R2) circle (2cm);
    \end{tikzpicture}
  }
  \quad
  \subfigure[Case 3: dashed parts can be removed from the domain.\label{fig:illus3}]{
    \begin{tikzpicture}
      \clip (-0.75,-1) rectangle (3.75,1);
      \draw[very thick,blue] (0,0) -- (0.8,0);
      \draw[very thick,dashed,blue] (0.8,0) -- (2,0);
      \draw[ultra thick,red] (1,0.05) -- (3,0.05);
      \node (B1) at (3,0.05) [circle,fill=red,minimum size=1mm,inner sep=0mm] {};
      \draw[dashed,red] (B1) circle (2.2cm);
    \end{tikzpicture}
  }
  \caption{Illustration of the three different cases of
    Prop.~\ref{prop:prop1fast}.
    The red and blue lines correspond to the interval domain of~$y_j$
    and~$z_j$, respectively.
  }
  \label{fig:illustratePropFast}
\end{figure}
\begin{proof}
  Recall that, since~$\boxdom_y$ and~$\boxdom_z$ are boxes, the domain of variable~$y_j$ and~$z_j$ is an interval for each~$j \in [\dim]$.
  Let~$(y', z') \in \boxdom_y \times \boxdom_z$ adhere to~\eqref{eq:mindc}.
  Then, Lem.~\ref{lem:simpleMindc} implies~$(y'_j - z'_j)^2 \geq \distvalalt_j$.
  For each of the three cases, we discuss one variant as the other one follows by symmetry.

  In the first case, $(y'_j - z'_j)^2 \geq \distvalalt_j$ yields~$z'_j - y'_j \geq \sqrt{\distvalalt_j}$ since~$z'_j \geq y'_j$.
  Hence, $y'_j \leq z'_j - \sqrt{\distvalalt_j} \leq \ub{z}_j - \sqrt{\distvalalt_j}$.
  Together with the trivial inequality~$y'_j \leq \ub{y}_j$, we conclude~$y'_j \leq \min\{\ub{y}_j, \ub{z}_j - \sqrt{\distvalalt_j}\}$.

  In the second case, the domain of~$y_j$ is a subset of the domain of~$z_j$.
  Since~$(y_j - z_j)^2$ is convex, its maximum value is attained at a vertex of the feasible region~$[\lb{y},\ub{y}] \times [\lb{z}, \ub{z}]$.
  That is,
  \begin{align*}
    \max_{y,z} (y_j - z_j)^2
    &=
      \max\{ (\lb{y}_j - \lb{z}_j)^2, (\lb{y}_j - \ub{z}_j)^2, (\ub{y}_j - \lb{z}_j)^2, (\ub{y}_j - \ub{z}_j)^2\}\\
    &=
      \max\{ (\lb{y}_j - \ub{z}_j)^2, (\ub{y}_j - \lb{z}_j)^2\},
  \end{align*}
  where the latter holds because the domain of~$y$ is a subset of the domain of~$z$.
  Consequently, if~$\ub{y}_j - \lb{z}_j < \sqrt{\distvalalt_j}$ and~$\ub{z}_j - \lb{y}_j < \sqrt{\distvalalt_j}$, then~$\max_{y,z} (y_j - z_j)^2 < \distvalalt_j$, contradicting Lem.~\ref{lem:simpleMindc}.
  Thus, $\ub{y}_j - \lb{z}_j \geq \sqrt{\distvalalt_j}$ or~$\ub{z}_j - \lb{y}_j \geq \sqrt{\distvalalt_j}$ holds.

  For the last case, we only discuss the first subcase, because the second one follows analogously.
  By the same arguments as in Case~2, $\ub{z}_j - \lb{y}_j < \sqrt{\distvalalt_j}$ and~$\ub{y}_j - \ub{z}_j < \sqrt{\distvalalt_j}$ implies that every point in~$[\lb{y}_j,\ub{y}_j]$ has distance less than~$\sqrt{\distvalalt_j}$ to~$\ub{z}_j$.
  Hence, $z'_j \leq \ub{y}_j - \sqrt{\distvalalt_j}$ needs to hold.
  Together with the trivial inequality~$z'_j \leq \ub{z}_j$, we obtain~$z'_j \leq \min\{\ub{z}_j, \ub{y}_j - \sqrt{\distvalalt_j}\}$.
\end{proof}
For each~$i \in [\dim]$, the value~$\dist_i$ can be computed in constant time, because~$(x_i - y_i)^2$ is a convex function and its maximum over the domain~$\boxdom = [\lb{y}_i, \ub{y}_i] \times [\lb{z}_i, \ub{z}_i]$ is attained at one of the (at most) four vertices of~$\boxdom$.
The variable bounds of Prop.~\ref{prop:prop1fast} can therefore be
computed in~$O(\dim)$ time.

In our implementation, we receive the domains~$\boxdom_y$ and~$\boxdom_z$ as well as a \minDC $\norm{y - z}^2 \geq \distval^2$ as input and compute~$\dist_j$ for~$j \in [\dim]$ according to these domains.
We then compute~$T = \sum_{j = 1}^\dim\dist^2_j$, iterate over all dimension indices~$j \in [\dim]$, and possibly apply a reduction according to Prop.~\ref{prop:prop1fast} for the 1-dimensional \minDC $(y_j - z_j)^2 \geq \lb{\distval}^2 - (T - \dist^2_j)$.
Since reducing one variable domain might allow to reduce domains of other variables, cf.\ the wave effect in~\cite{LocatelliRaber2002}, possibly more reductions can be found by calling the method multiple times.
We therefore use this method iteratively until no domain reductions are found anymore or an iteration limit is hit, see Sec.~\ref{sec:num} for details.
\subsubsection{Simple cutting planes}
\label{sec:algOneCut}
The algorithms presented in the previous section can only shrink the domain~$\boxdom_y$ if there is a facet of~$\boxdom_y$ that is contained in~$\ball_{\lb{\distval}}(z)$ for every~$z \in V(\boxdom_z)$.
In the situation of Fig.~\ref{fig:cut}, one therefore needs to use cutting planes to cut off parts of~$\boxdom_y \cap C(\boxdom_z, \lb{\distval})$.
Since the method by Locatelli and Raber for constructing the convex hull of~$\boxdom_y \cap C(\boxdom_z, \lb{\distval})$ is rather expensive, we suggest a simpler set of cutting planes.

\begin{figure}
  \centering

  \subfigure[Cutting plane for one \minDC.\label{fig:cut}]{
    \begin{tikzpicture}
      \clip (-0.2,-1.2) rectangle (6,3.2);
      \draw[fill=blue!30] (1,0) rectangle (5.2,2);
      \node (Dy) at (1.5,1.5) {$\textcolor{blue}{D_y}$};
      \draw[fill=red!30] (4.2,-0.2) rectangle (5.5,-1);
      \node (Dz) at (4.6,-0.5) {$\textcolor{red}{D_z}$};
      \node (R1) at (5.5,-1) [circle,fill=red,minimum size=2mm,inner sep=0mm] {};
      \node (R2) at (5.5,-0.2) [circle,fill=darkgreen,minimum size=2mm,inner sep=0mm] {};
      \node (R3) at (4.2,-1) [circle,fill=cyan,minimum size=2mm,inner sep=0mm] {};
      \node (R4) at (4.2,-0.2) [circle,fill=orange,minimum size=2mm,inner sep=0mm] {};

      \draw[dashed,draw=red] (R1) circle (2.5cm);
      \draw[dashed,draw=darkgreen] (R2) circle (2.5cm);
      \draw[dashed,draw=cyan] (R3) circle (2.5cm);
      \draw[dashed,draw=orange] (R4) circle (2.5cm);

      \clip (3.2,0) rectangle (5.2,1.5);
      \clip (R1) circle (2.5cm);
      \clip (R3) circle (2.5cm);
      \draw[dashed,draw=red,fill=blue!60] (R1) circle (2.5cm);
      \draw[dashed,draw=cyan,fill=blue!60] (R3) circle (2.5cm);
      \draw[-,very thick,draw=blue,pattern={Lines[angle=45,line width=0.5mm]},pattern color=blue] (3.27,0.01) -- (5.185,0) -- (5.185,1.25) -- cycle;
    \end{tikzpicture}
  }
  \quad\quad
  \subfigure[Shrinking box domains for two \minDCs.\label{fig:twoConss}]{
    \begin{tikzpicture}
      \clip (-0.2,-1.2) rectangle (5.4,3.2);
      \draw[fill=blue!30] (0,0) rectangle (4,2);
      \node (Dy) at (0.5,1.5) {$\textcolor{blue}{D_y}$};
      \draw[fill=red!30] (4.2,-0.2) rectangle (5,-1);
      \node (Dz) at (4.6,-0.6) {$\textcolor{red}{D_{z^{(1)}}}$};
      \draw[fill=darkgreen!30] (4.4,2.2) rectangle (5.2,3);
      \node (Dz) at (4.8,2.6) {$\textcolor{darkgreen}{D_{z^{(2)}}}$};
      \node (G) at (5.2,3) [circle,fill=darkgreen,minimum size=2mm,inner sep=0mm] {};
      \node (R) at (5,-1) [circle,fill=red,minimum size=2mm,inner sep=0mm] {};

      \draw[dashed] (R) circle (2.5cm);
      \draw[dashed] (G) circle (2.5cm);
      \clip (2.7,0) rectangle (4,2);
      \draw[dashed,fill=blue!60] (R) circle (2.5cm);
      \clip (2.8,0) rectangle (4,2);
      \draw[dashed,fill=blue!60] (G) circle (2.5cm);
      \draw[dashed] (R) circle (2.5cm);
      \draw[-,very thick,draw=blue,pattern={Lines[angle=45,line width=0.5mm]},pattern color=blue] (3.6,0.015) -- (3.985,0.015) -- (3.985,1.985) -- (3.6,1.985) -- cycle;
      \node (DD) at (3.8,0.2) {$\textcolor{white}{D'}$};
    \end{tikzpicture}
  }  
  \caption{Illustration of domain reductions derived via cutting planes and two \minDCs.}\label{fig:2Ddistances}
\end{figure}

Let us assume that~$\boxdom_y$ is full-dimensional and~$y' \in V(\boxdom_y) \cap C(\boxdom_z, \lb{\distval})$.
To find a cutting plane that cuts off~$y'$, let~$E$ be the set of~$\dim$ (polyhedral) edges of~$\boxdom_y$ that are incident with~$y'$.
For~$e \in E$, let~$p(e) = \argmax\{\norm{y' - p} : p \in e \cap C(\boxdom_z, \lb{\distval})\}$, i.e., $p(e)$ is the point on~$e$ that is farthest away from~$y'$ while still contained in~$C(\boxdom_z, \lb{\distval})$.
Since~$y' \in C(\boxdom_z, \lb{\distval})$, the simplex~$\conv(\{y'\} \cup \{p(e) : e \in E\})$ is contained in~$C(\boxdom_z, \lb{\distval})$ and the facet defined by~$\{p(e) : e \in E\}$ can be used as a cutting plane to cut off~$y'$, see Fig.~\ref{fig:cut}.

In our implementation, we derive these inequalities from the box domain of a variable~$y$.
That is, even when cutting planes are found, they are not taken into account for deriving possibly stronger cuts in subsequent iterations.
This allows to keep a simple representation of the domain of~$y$, which is then a box rather than a general polytope, cf.\ the discussion of using~$\boxdom'$ instead of~$\bar{\boxdom}$ in Sec.~\ref{sec:locatelli} and~\cite{LocatelliRaber2002}.
\begin{observation}
  For fixed~$y' \in V(\boxdom_y)$ and~$e \in E$, $p(e)$ can be found by computing the intersection points of~$e$ and~$\partial\ball_{\lb{\distval}}(z)$ for every~$z \in V(\boxdom_z)$.
  Computing the intersections of~$e$ with a single sphere involves~$O(\dim)$ numbers; a single intersection can thus be computed in~$O(\dim)$ time.
  As~$\boxdom_z$ has~$O(2^\dim)$ vertices, the set~$\{p(e) : e \in E\}$ can be found in~$O(\dim2^{\dim})$ time.
  Finding the hyperplane spanned by~$\{p(e) : e \in E\}$ requires the solution of a system of~$\dim$ linear equations.
  The time for computing a single hyperplane is thus dominated by the time to find~$\{p(e) : e \in E\}$ and is therefore~$O(\dim2^{\dim})$.
\end{observation}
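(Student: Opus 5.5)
We justify each complexity bound in the Observation in turn, following the order in which the quantities are computed. Throughout, $y'\in V(\boxdom_y)$ is fixed. We write each edge $e\in E$ as $e=\{y'+t\,s\,u_j : t\in[0,\ell_j]\}$, where $u_j$ is the $j$-th unit vector, $s\in\{\pm1\}$, and $\ell_j=\ub{y}_j-\lb{y}_j$.

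\textbf{Step 1: reduce $p(e)$ to interval intersections.} Membership of $y'+t s u_j$ in $C(\boxdom_z,\lb{\distval})$ means $\norm{y'+tsu_j - z}<\lb{\distval}$ for every $z\in V(\boxdom_z)$. For a single $z$ this is a univariate quadratic inequality in $t$,
\[
t^2 + 2ts(y'_j-z_j) + \norm{y'-z}^2-\lb{\distval}^2<0 .
\]
Its solution set is an open interval whose endpoints are exactly the intersections of the line through $e$ with $\partial\ball_{\lb{\distval}}(z)$. Because $y'\in C(\boxdom_z,\lb{\distval})$, the value $t=0$ lies in every one of these intervals. Hence $e\cap C(\boxdom_z,\lb{\distval})$ corresponds to $[0,\min\{\ell_j,\min_z t_z^+\})$, where $t_z^+$ is the larger root for vertex $z$. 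The farthest point $p(e)$ is therefore determined by this minimum, taking the supremum and closure as the paper implicitly does. This justifies the first sentence of the Observation.

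\textbf{Step 2: cost of one intersection.} The coefficients of the quadratic are $s(y'_j-z_j)$ and $\norm{y'-z}^2-\lb{\distval}^2$, which involve $O(\dim)$ numbers. Computing $\norm{y'-z}^2$ takes $O(\dim)$ operations, and solving the quadratic takes $O(1)$ more. So a single intersection costs $O(\dim)$.

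\textbf{Step 3: cost of all $p(e)$.} There are $\card{V(\boxdom_z)}=O(2^\dim)$ vertices, so a direct count gives $O(\dim\, 2^\dim)$ per edge and $O(\dim^2 2^\dim)$ for all $\dim$ edges. This is where the claimed bound needs care. The fix is to compute $\norm{y'-z}^2$ once per $z$, in $O(\dim 2^\dim)$ total time. For each edge direction $j$, the constant term $\norm{y'-z}^2-\lb{\distval}^2$ is then reused and only the scalar $y'_j-z_j$ changes, so each root costs $O(1)$. The total is $O(\dim 2^\dim + \dim\cdot 2^\dim)=O(\dim 2^\dim)$, matching the Observation.

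\textbf{Step 4: the hyperplane.} The points $p(e)=y'+t_e s_e u_{j(e)}$ lie on distinct coordinate axes through $y'$. The hyperplane through them can therefore be written in closed form as
\[
\sum_j s_j (x_j-y'_j)/t_j = 1,
\]
in $O(\dim)$ time, and certainly within the cost of solving a $\dim\times\dim$ system. That cost, $O(\dim^3)$, is not dominated by $O(\dim 2^\dim)$ for small $\dim$, so the closed form is what secures the domination claim. Degenerate cases with $t_e=0$ cannot occur, since $y'$ lies in the open set $C$ and hence every $t_e>0$.

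The main obstacle is the counting in Step 3. The naive count is off by a factor of $\dim$, and it is precisely the precomputation of $\norm{y'-z}^2$ that brings the bound down to the stated $O(\dim 2^\dim)$.
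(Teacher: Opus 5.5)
Your proof is correct. It follows the paper's outline: intersect each edge incident to $y'$ with the $O(2^d)$ spheres $\partial\ball_{\lb{\distval}}(z)$, take the nearest crossing (or the far endpoint), then fit the hyperplane. It differs from the paper in two places.

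\textbf{Step~3 fills a gap in the paper's count.} As written, the paper's argument ($O(2^d)$ spheres, each intersection costing $O(d)$) gives $O(d\,2^d)$ per edge. That is $O(d^2 2^d)$ for all $d$ edges in $E$. Your amortization computes $\norm{y'-z}^2$ once per vertex $z$ and then gets each root in $O(1)$ from the single scalar $y'_j - z_j$. This is exactly what justifies the stated $O(d\,2^d)$ bound for the whole set $\{p(e) : e \in E\}$.

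\textbf{Your Step~4 remark is mistaken.} You say the $O(d^3)$ cost of solving a $d \times d$ system is not dominated by $O(d\,2^d)$ for small $d$. But $d^2 \le 2^d$ for all $d \ge 4$, so $d^3 \in O(d\,2^d)$, and small values of $d$ are irrelevant to an asymptotic bound. The paper's appeal to solving the linear system therefore already suffices. Your intercept form $\sum_j s_j (x_j - y'_j)/t_j = 1$ is correct and yields the hyperplane in $O(d)$ time. That is a cleaner argument, but an improvement rather than a necessity.

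\textbf{A minor point on $t_e > 0$.} Positivity of $t_e = \min\{\ell_j, \min_z t_z^+\}$ uses the paper's full-dimensionality assumption on $\boxdom_y$, which gives $\ell_j > 0$. Openness of $C(\boxdom_z, \lb{\distval})$ alone is not enough.
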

\begin{remark}
  The cutting planes described above share similarities with intersection cuts~\cite{Balas1971,ConfortiEtAl2014}.
  Intersection cuts are cutting planes that are derived from the intersection of a polyhedron (in our case~$\boxdom_y$) and a convex set (in our case~$C(\boxdom_z, \lb{\distval})$) that does not contain a feasible solution.
  While our inequalities are used to cut off points that violate a \minDC, intersection cuts are used in mixed-integer programming to remove parts of a polyhedron that do not contain integer points.
\end{remark}
\subsection{Handling pairs of minimum distance constraints}
\label{sec:algTwo}
Consider the situation of Fig.~\ref{fig:twoConss}, where we are given two \minDCs
\begin{align}
  \label{eq:twoDistConss}
  \norm{y - z^{(1)}}^2 &\geq \distval^2_1 &\text{and} && \norm{y - z^{(2)}}^2 &\geq \distval^2_2
\end{align}
that constrain a common variable vector~$y \in \R^\dim$.
That is, both constraints can use possibly different minimum distance values.
In this example, the method of Locatelli and Raber fails to find a smaller box domain for~$y$ because every facet of~$\boxdom_y$ contains a point that is at distance at least~$\distval_1$ (resp.~$\distval_2$) to a point in~$\boxdom_{z^{(1)}}$ (resp.~$\boxdom_{z^{(2)}}$).
By considering both \minDCs simultaneously, however, the hatched area~$\boxdom'$ can be removed from~$\boxdom_y$ because every point in~$\boxdom'$ has distance below~$\distval_1$ to a point in~$\boxdom_{z^{(1)}}$ or distance below~$\distval_2$ to a point in~$\boxdom_{z^{(2)}}$.
In the following, we show how to systematically exploit this observation.
To this end, we proceed in two steps.
First, we discuss how to decide whether a given box domain~$\boxdom'$ can be removed from~$\boxdom_y$ when we are given two \minDCs as in~\eqref{eq:twoDistConss}.
Second, we show how to (heuristically) find candidates for such domains~$\boxdom'$.

\paragraph{Deciding whether~$\bm{\boxdom'}$ can be removed.}
Suppose we have guessed a box domain~$\boxdom'$ that we want to remove.
Then, the following proposition shows how to verify whether~\eqref{eq:twoDistConss} implies~$y \in \boxdom_y \setminus \boxdom'$ in case both~$\boxdom_{z^{1}}$ and~$\boxdom_{z^{2}}$ are singletons.
\begin{proposition}
  \label{prop:boxpairs}
  Let~$\boxdom_y, \boxdom' \subseteq \R^\dim$ be boxes, let~$\boxdom_{z^{(1)}} = \{p\}, \boxdom_{z^{2}} = \{q\} \subseteq \R^\dim$, and let~$\distval_1, \distval_2 > 0$.
  Then, $\boxdom' \subseteq \ball_{\distval_1}(p) \cup \ball_{\distval_2}(q)$ if and only if
  \begin{itemize}
  \item the vertices of~$\boxdom'$ are contained in~$\ball_{\distval_1}(p) \cup \ball_{\distval_2}(q)$, and
  \item for every edge~$e$ of~$\boxdom'$, we have~$e \cap \partial \ball_{\distval_1}(p) \subseteq \ball_{\distval_2}(q)$.
  \end{itemize}
\end{proposition}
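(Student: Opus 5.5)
We would prove the two implications separately. Necessity is immediate. If $\boxdom' \subseteq \ball_{\distval_1}(p) \cup \ball_{\distval_2}(q)$, then in particular all vertices of $\boxdom'$ are covered. Moreover, a point $x \in e \cap \partial\ball_{\distval_1}(p)$ satisfies $\norm{x-p} = \distval_1$, so it does not lie in the open ball $\ball_{\distval_1}(p)$. It therefore must lie in $\ball_{\distval_2}(q)$.

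For sufficiency we would argue by induction on the dimension $k$ of the faces $F$ of $\boxdom'$, showing $F \subseteq \ball_{\distval_1}(p) \cup \ball_{\distval_2}(q)$.

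\emph{Case $k=0$.} This is the first hypothesis.

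\emph{Case $k=1$.} Let $e=[a,b]$ be an edge. Since $\ball_{\distval_1}(p)$ is convex and open, $e \cap \ball_{\distval_1}(p)$ is a relatively open subinterval of $e$. Hence $e \setminus \ball_{\distval_1}(p)$ is either all of $e$, or a union of at most two closed subsegments. Each such subsegment has endpoints that are either vertices of $e$ or points of $e \cap \partial\ball_{\distval_1}(p)$.
\begin{itemize}
\item A vertex in such a subsegment is not in $\ball_{\distval_1}(p)$, so by the first hypothesis it lies in $\ball_{\distval_2}(q)$.
\item A boundary point lies in $\ball_{\distval_2}(q)$ by the second hypothesis.
\end{itemize}
Convexity of $\ball_{\distval_2}(q)$ then shows that each subsegment lies in $\ball_{\distval_2}(q)$. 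This also covers the case $e \cap \ball_{\distval_1}(p) = \emptyset$, where both endpoints are vertices.

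\emph{Case $k \geq 2$.} This is the main step. Suppose some $x \in F$ satisfies $\norm{x-p} \geq \distval_1$ and $\norm{x-q} \geq \distval_2$. Since the linear space parallel to $F$ has dimension $k \geq 2$, it contains a nonzero direction $v$ with $v^\top(p-q) = 0$. Along the line $t \mapsto x + tv$ we have
\[
  \norm{x+tv-p}^2 = \norm{x-p}^2 + 2t\,v^\top(x-p) + t^2\norm{v}^2 ,
\]
and similarly for $q$. The linear coefficients agree, because $v^\top(x-p) = v^\top(x-q)$. Replacing $v$ by $-v$ if necessary, we may assume this common coefficient is nonnegative. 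Then both squared distances are nondecreasing for $t \geq 0$.

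Since $F$ is bounded, there is a largest $t^* \geq 0$ with $x + t^* v \in F$. The point $x + t^* v$ lies on the relative boundary of $F$, hence in a face of dimension less than $k$. It still satisfies both distance inequalities, so it is uncovered. This contradicts the induction hypothesis, and taking $F = \boxdom'$ finishes the proof.

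The delicate point is choosing the direction of motion so that neither distance decreases. The orthogonality to $p-q$ is exactly what makes the two quadratics share their linear term. This device needs $k \geq 2$, which is why edges must be handled by the explicit boundary-point hypothesis instead.
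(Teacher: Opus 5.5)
Your proof is correct, and it takes a genuinely different route from the paper's.

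The paper argues asymmetrically. It sets $A = \boxdom' \setminus \ball_{\distval_1}(p)$ and invokes reverse convexity to get $A \subseteq \conv(I \cup V_2)$, where $I$ is the set of points where edges of $\boxdom'$ meet $\partial\ball_{\distval_1}(p)$ and $V_2$ is the set of vertices lying in $\ball_{\distval_2}(q)$. The two hypotheses place all of these generators in $\ball_{\distval_2}(q)$, and convexity of that ball finishes the proof.

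You instead work with the union directly. You show that an uncovered point of a face of dimension $k \geq 2$ can be pushed to the relative boundary along a direction $v \perp (p-q)$. Along such a line the two squared distances differ by a constant, so a single ray keeps the point uncovered.

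Your route gains self-containedness. The paper asserts the convex-hull inclusion without proof. Proving it takes an induction much like yours, but with a direction $v \perp (x-p)$ (tangent to the sphere through $x$), using both endpoints of the resulting chord.

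The paper's route gains generality. Your key step needs the two squared-distance functions to have the same quadratic part, i.e., two Euclidean balls. The paper's argument goes through verbatim with $\ball_{\distval_1}(p)$ replaced by any open convex set and $\ball_{\distval_2}(q)$ by any convex set.
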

\begin{proof}
  First, assume~$\boxdom' \subseteq \ball_{\distval_1}(p) \cup \ball_{\distval_2}(q) \eqqcolon B$.
  Then, the vertices of~$\boxdom'$ are obviously contained in~$B$.
  Moreover, $B$ can only cover~$\boxdom'$ when~$\ball_{\distval_2}(q)$ contains~$\boxdom' \cap \partial \ball_{\distval_1}(p)$.

  Second, assume that the two properties hold.
  Let~$V_1$ and~$V_2$ be the vertices of~$\boxdom'$ that are contained
  in~$\ball_{\distval_1}(p)$ and~$\ball_{\distval_2}(q)$, respectively.
  Moreover, let~$I$ be the intersection points of~$\partial \ball_{\distval_1}(p)$ and the
  edges of~$\boxdom'$.
  We need to show that~$\ball_{\distval_2}(q)$ contains~$A \define \boxdom' \setminus \ball_{\distval_1}(p)$.
  Since~$A$ is a reverse convex set, $A \subseteq \conv(I \cup V_2)$.
  Thus, $A \subseteq \ball_{\distval_2}(q)$ because~$\conv(I \cup V_2) \subseteq \ball_{\distval_2}(q)$ due to the convexity of~$\ball_{\distval_2}(q)$.
\end{proof}
When~$\boxdom_{z^{(1)}}$ and~$\boxdom_{z^{(2)}}$ are not singletons, then the same arguments as in Sec.~\ref{sec:locatelli} can be used to show that~$\boxdom'$ can be removed from~$\boxdom_y$:
for every pair of vertices~$(p,q) \in V(\boxdom_{z^{(1)}}) \times V(\boxdom_{z^{(2)}})$, one needs to check whether Prop.~\ref{prop:boxpairs} applies.
\begin{observation}
  For fixed~$p \in V(\boxdom_{z^{(1)}})$ and~$q \in V(\boxdom_{z^{(2)}})$, checking~$V(\boxdom') \subseteq \ball_{\distval_1}(p) \cup \ball_{\distval_2}(q)$ takes~$O(\dim 2^{\dim})$ time, because~$\card{V(\boxdom')} \in O(2^\dim)$ and computing the distance between two points requires linear time.
  Finding the intersection of~$\partial\ball_{\distval_1}(p)$ with an edge of~$\boxdom'$ and computing the distance to~$q$ takes~$O(\dim)$ time.
  The entire procedure for fixed~$p$ and~$q$ thus takes~$O(\dim 2^{\dim} + \dim^2 2^{\dim - 1}) = O(\dim^2 2^\dim)$ time, because~$\boxdom'$ has~$O(\dim 2^{\dim - 1})$ edges.
  Running this algorithm for all~$O(2^{2 \dim})$ pairs~$(p,q)$ of vertices therefore takes~$O(\dim^2 2^{3\dim})$ time.
\end{observation}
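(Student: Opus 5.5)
This observation is a running-time count, so I would prove it by tallying the cost of each check in Prop.~\ref{prop:boxpairs} for a fixed pair $(p,q)$ and then multiplying by the number of vertex pairs. The combinatorial facts needed are elementary. A $\dim$-dimensional box has $2^\dim$ vertices and $\dim 2^{\dim-1}$ edges, since each vertex lies on $\dim$ edges and each edge has two endpoints. Evaluating $\norm{x-w}^2$ for $x,w\in\R^\dim$ costs $O(\dim)$ arithmetic operations.

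First, for fixed $p,q$, I would bound the vertex check. For each $v\in V(\boxdom')$ we compute $\norm{v-p}^2$ and $\norm{v-q}^2$ and compare them with $\distval_1^2$ and $\distval_2^2$. That is $O(\dim)$ work per vertex and $O(\dim 2^\dim)$ in total.

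Second, I would bound the edge check. An edge $e$ of $\boxdom'$ varies in exactly one coordinate $k$, so write $e=\{v+t e_k : t\in[0,\ell]\}$. Then $\norm{v+te_k-p}^2=\delta_1^2$ is a scalar quadratic in $t$. Its coefficients are $1$, $2(v_k-p_k)$ and $\norm{v-p}^2-\distval_1^2$, and they take $O(\dim)$ time to obtain, mostly from the squared norm. Solving it gives at most two points of $e\cap\partial\ball_{\distval_1}(p)$. For each such point we compute its distance to $q$ in $O(\dim)$ time; with care one can even reuse $\norm{v-q}^2$ and adjust only coordinate $k$.

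The main bookkeeping subtlety is that the statement claims $O(\dim^2 2^{\dim-1})$ for the edge part. The per-edge $O(\dim)$ bound times $\dim 2^{\dim-1}$ edges gives exactly this, so I would simply state the per-edge cost as $O(\dim)$ and not try to sharpen it. Summing the two checks gives $O(\dim 2^\dim + \dim^2 2^{\dim-1})$, which is $O(\dim^2 2^\dim)$ because $\dim 2^\dim \le \dim^2 2^\dim$ and $\dim^2 2^{\dim-1}\le \dim^2 2^{\dim}$.

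Finally, $\card{V(\boxdom_{z^{(1)}})}\cdot\card{V(\boxdom_{z^{(2)}})}\le 2^\dim\cdot 2^\dim=2^{2\dim}$, and the per-pair work is independent across pairs. The total cost is therefore $O(2^{2\dim}\cdot \dim^2 2^\dim)=O(\dim^2 2^{3\dim})$. No step is genuinely hard. The only point needing a sentence of justification is that one edge–sphere intersection costs $O(\dim)$ rather than more, which the explicit one-parameter quadratic settles.
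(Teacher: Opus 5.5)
Your proposal is correct and follows the paper's counting argument: $O(d)$ per vertex over $2^d$ vertices, $O(d)$ per edge over $d2^{d-1}$ edges, and then multiplication by the at most $2^{2d}$ vertex pairs $(p,q)$. Your explicit one-parameter quadratic $\|v+te_k-p\|_2^2=\delta_1^2$ justifies the $O(d)$ per-edge cost, which the paper only asserts. As you note, reusing the precomputed vertex distances would even give $O(1)$ per edge, so the stated bound is a valid but non-tight upper bound.
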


\paragraph{Finding~$\bm{\boxdom'}$.}
To find a candidate box~$\boxdom'$ to remove, we propose two heuristic approaches.
Both approaches iterate through~$i \in [\dim]$ and check whether the domain of~$y_i$ can be reduced.
Since removing a box~$\boxdom'$ from~$\boxdom_y$ results in changing a variable lower or upper bound on~$y_i$, the first approach selects the new bound via bisection on the interval~$[\lb{y}_i, \ub{y}_i]$ and checks whether the removed box satisfies the properties of Prop.~\ref{prop:boxpairs}.
If not, the size of the removed box is decreased; otherwise, the size of the removed box is increased.
Since the time needed to check the requirements of Prop.~\ref{prop:boxpairs} is rather high, we restrict the number of iterations in the bisection to three in our implementation.
Moreover, to increase the chance of finding a reduction, we split the interval with a ratio of~1:9, where~$\boxdom'$ gets~\SI{10}{\percent} of the original interval length.
If no reduction can be found within the iteration limit, the algorithm continues with the next variable without changing the domain of~$y_i$.

The second approach is restricted to the case~$\dim \in \{2,3\}$.
It selects a coordinate~$y_i$, $i \in [\dim]$, and uses geometric ideas to improve a variable bound, say~$\ub{y}_i$.
Motivated by Fig.~\ref{fig:twoConss}, the improved upper bound for~$y_i$ for~$\dim = 2$ is defined by the intersection of two spheres or the intersection of a sphere and an edge of~$\boxdom_y$.
Let~$E$ be the edges of~$\boxdom_y$ that are orthogonal to the facet defined by~$y_i = \ub{y}_i$.
For each~$p \in V(\boxdom_{z^{(1)}})$, $q \in V(\boxdom_{z^{(2)}})$, and~$e \in E$, we collect the intersection points of~$e$ with~$\partial\ball_{\lb{\distval}_1}(p)$ and~$\partial\ball_{\lb{\distval}_2}(q)$.
Let~$I$ be the set of all such intersection points.
Moreover, for~$\dim = 2$, compute the intersections of the spheres~$\partial\ball_{\lb{\distval}_1}(p)$ and~$\partial\ball_{\lb{\distval}_2}(q)$.
Let~$J_2$ be the set of all intersections for which the~$i$-th coordinate lies in the interval~$[\lb{y}_i, \ub{y}_i]$.
The guess for the box~$\boxdom'$ is then given by reducing the upper bound~$\ub{y}_i$ to~$\max\{ y_i : y \in I \cup J_2\}$ and keeping all other bounds identical.

When~$\dim = 3$, we also compute the set~$I$.
But since the intersection of two spheres can be a circle, we check the heights~$y_i$ at which these circles intersect the facets of~$\boxdom_y$ that are incident with the facet defined by~$y_i = \ub{y}_i$.
Let~$J_3$ be the set of all intersections for which the~$i$-th coordinate lies in the interval~$[\lb{y}_i, \ub{y}_i]$.
Then, as for~$\dim = 2$, our guess for the box~$\boxdom'$ is given by reducing the upper bound~$\ub{y}_i$ to~$\max\{ y_i : y \in I \cup J_3\}$ and keeping all other bounds identical.
\section{Handling rotation symmetries}
\label{sec:sym}
Many geometric optimization problems require finding an optimal arrangement of~$n$ points in~$\R^\dim$.
Examples include sphere packing and kissing number problems, where the point coordinates can be represented by a matrix~$X \in \R^{n \times \dim}$.
These problems exhibit substantial symmetry: exchanging identical spheres yields equivalent solutions, while additional symmetries arise from permuting identical dimensions or applying rotations to the coordinates of all points.
In the following, we denote the~$i$-th row of~$X$ by~$X^i$.

When solving these problems with spatial BB, such symmetries must be addressed; otherwise, symmetric regions of the search space are explored repeatedly, leading to unnecessarily long computation times~\cite{Liberti2012,Margot2010,PfetschRehn2019}.
Permutation symmetries arising from exchanging spheres or dimensions have been studied extensively and many sophisticated symmetry handling (SH) methods exist, e.g., \cite{DoornmalenHojny2024a,KaibelPfetsch2008,Margot2002,OstrowskiEtAl2011}.
For rotation symmetries, however, only simple variable fixings~\cite{HoarePal1971} have been suggested or finite rotation subgroups have been tried to be handled~\cite{Doornmalen2026} to the best of our knowledge.

For point arrangement problems, rotations~$\rho$ in~$\R^\dim$ act on an arrangement~$X = (X^1,\dots,X^n) \in \R^{n \times \dim}$ as~$\rho(X) = (\rho(X^1),\dots,\rho(X^n))$.
To handle rotation symmetries, \cite{HoarePal1971} suggests fixing~$X_{i,j} = 0$ for all~$i \in [L]$, $j \in \{i+1,\dots,\dim\}$, where~$L = \min\{n,\dim\}$.
For translation invariant problems, they also suggest fixing~$X_{1,1} = 0$, but this is not applicable when only considering rotations.
In the following, we show that these fixings are the strongest possible variable fixings that can be derived from so-called Givens rotations.

A Givens rotation~$\rho$ is defined by coordinates~$j,j' \in [\dim]$, $j < j'$, and~$\alpha \in [0,2\pi)$.
Then, $\rho$ acts like an~$\alpha$-rotation in the $j$-$j'$-plane and keeping all remaining dimensions invariant.
From~\cite[Thm.~8]{DoornmalenHojny2024a}, it follows that all symmetries of Givens rotations are handled when enforcing the lexicographic relation
\begin{align}
  \label{eq:lexrotation}
  &(X_{1,j}, X_{1,j'}) \lexgeq\\
  &
  (\cos(\alpha) X_{1,j} - \sin(\alpha) X_{1,j'},
  \sin(\alpha) X_{1,j} + \cos(\alpha) X_{1,j'}).\notag
\end{align}
For each~$\alpha \in [0,2\pi)$, the first coordinate implies the inequality~$(1 - \cos(\alpha))X_{1,j} + \sin(\alpha) X_{1,j'} \geq 0$.
Basic calculus techniques show
$\min_{\alpha \in [0,2\pi)} (1 - \cos(\alpha))X_{1,j} + \sin(\alpha) X_{1,j'} = X_{1,j} - \sqrt{X_{1,j}^2 + X_{1,j'}^2}$.
Thus, all feasible solutions satisfy $X_{1,j'} = 0$ and~$X_{1,j} \geq 0$.
Consequently, Givens rotations and~\cite[Thm.~8]{DoornmalenHojny2024a} imply~$X_{1,1} \geq 0$ and~$X_{1,j'} = 0$ for all~$j' > 1$.

Iteratively applying Givens rotations to rows~$i = 2,\dots,L$ for Givens rotations keeping dimensions~$1,\dots,i-1$ invariant shows $X_{i,j} = 0$ for all~$i \in [L]$, $j \in \{i+1,\dots,\dim\}$ as claimed.
Moreover, it is easy to see that any solution adhering to these fixings satisfies~\eqref{eq:lexrotation} for all Givens rotations.
That is, despite their simplicity, no more symmetry reductions can be derived from Givens rotations.

In our implementation, we compare the effect of (not) handling rotation symmetries.
When rotation symmetries are handled, we apply the above mentioned variable fixings.
Additionally, we use orbitopal reduction~\cite{DoornmalenHojny2024a} to lexicographically sort rows~$L+1,\dots,n$ of~$X$ and add the inequalities~$X_{L+1,1} \geq X_{L+2,1} \geq \dots \geq X_{n,1}$.
This is possible, because the variable fixings and bound reductions do not act on points~$L+1,\dots,n$, i.e., they are still interchangeable and we can assume w.l.o.g.\ that they are sorted.
When rotation symmetries are not handled, we sort the rows and columns of~$X$ by orbitopal reduction and analogous inequalities as before.
Moreover, since the sphere packing and kissing number problem admit reflection symmetries, we enforce that some entries of~$X$ are nonnegative, see~\cite[Prop.~2]{Hojny2025}.
\section{Numerical experience}
\label{sec:num}
To evaluate the impact of the different algorithms for finding domain reductions based on \minDCs, we have implemented all techniques in the global MINLP solver \scip.
The experiments have been conducted for three different test sets:
general benchmark instances from the library MINLPLib~\cite{MINLPLIB} for which our implementation could automatically detect \minDCs (MINLP), instances of the obnoxious facility location (OFL) problem~\cite{Drezner2019} from EuclidLib~\cite{KuznetsovSahinidis2026}\footnote{https://github.com/anatoliy-kuznetsov/EuclidLib}, and variants of the kissing number problem and the problem of packing identical spheres into a bigger sphere (GEOM).
The dimension~$\dim$ of \minDCs for the MINLP test set is~$\dim \in \{1,\dots,5\}$, for OFL~$\dim = 2$, and for GEOM~$d \in \{2,3\}$.
For GEOM we also test the impact of handling rotation symmetries via the proposed variable fixings.

\paragraph{Hardware and software specifications.}
All experiments have been conducted on a Linux Cluster with Intel Xeon E5-1620 v4 \SI{3.5}{\giga\hertz} quad core processors and \SI{32}{\giga\byte} memory.
The code was executed single-threaded with a time limit of \SI{2}{\hour}.
We use \scip~10.0.0~\cite{SCIP10} as branch-and-bound framework; LP relaxations are solved using \solver{SoPlex}~8.0.0 and nonlinear problems are solved by \solver{Ipopt}~3.14.20~\cite{WachterBiegler2006}.
Symmetries are detected by \solver{sassy}~1.1~\cite{AndersEtAl2023} and \solver{Nauty}~2.8.8~\cite{Nauty}.
We use \scip's default parameters except for the optimality gap, which we set to~\SI{0.5}{\percent}.

\paragraph{Results for MINLP and OFL.}
Due to space restrictions, we discuss the performance of the different algorithms mainly using performance profiles~\cite{DolanMore2002} and refer the reader to an online supplement~\cite{supplement} for detailed numerical results and our code, which is also available on GitHub\footnote{https://github.com/christopherhojny/distance-constraints}.
A performance profile compares a method or setting to the best method for each problem.
The horizontal axis shows a performance ratio~$\tau \geq 1$, and the vertical axis the fraction of problems for which the method is within factor~$\tau$ of the best (competitive) method.
A higher profile indicates better overall performance, and the value for~$\tau = 1$ shows how often a method performed best.

For all test sets, we compare the performance of the different settings based on running time.
The profiles are based on all instances of the respective test set that could be solved by at least one setting within the time limit.
In preliminary experiments, we observed that separating the cutting planes of Sec.~\ref{sec:algOneCut} does not improve performance and can degrade performance if they are separated frequently.
We therefore disabled these cutting planes for all experiments.
Figs.~\ref{fig:minlpT} and~\ref{fig:oflT} show the results for the MINLP and OFL test sets, respectively, comparing our different domain reduction techniques.
The \emph{default} setting refers to \scip's default configuration without any of our techniques; \emph{heur\_0\_pair\_0} and \emph{heur\_1\_pair\_0} refer to the techniques from Secs.~\ref{sec:locatelli} and~\ref{sec:algOne}, respectively; and \emph{heur\_0\_pair\_1} and \emph{heur\_1\_pair\_1} refer to the algorithm of Sec.~\ref{sec:algTwo} with the geometrically inspired techniques and bisection, respectively.

For MINLP, all techniques for handling \minDCs substantially improve \scip's running time, with the settings involving \emph{heur\_0} performing better in general.
For OFL, however, there is a clear distinction between \emph{pair\_0} and \emph{pair\_1}.
While \emph{pair\_0} outperforms the \emph{default} setting, \emph{pair\_1} performs worse on most instances.

Overall, \emph{heur\_0\_pair\_0} is the most performant configuration.
Despite its simplicity, the method proposed in Sec.~\ref{sec:algOne} solves for both MINLP and OFL almost as many instances as the \emph{heur\_0\_pair\_0} setting.
In particular, for OFL, \emph{heur\_0\_pair\_0} and \emph{heur\_1\_pair\_0} are on approx.~\SI{60}{\percent} and~\SI{40}{\percent} of the instances the fastest settings, whereas the remaining settings yield hardly the fastest running time.
This shows that both \emph{heur\_0\_pair\_0} and our method \emph{heur\_1\_pair\_0} are complementary.
Especially in the presence of many \minDCs, it is advantageous to use more inexpensive algorithms for handling \minDCs that can be invoked frequently during the spatial branch-and-bound process to benefit from locally updated variable bounds.

\begin{figure}[t]
  \centering
  \subfigure[MINLP\label{fig:minlpT}]{%
    \includegraphics[scale=0.45]{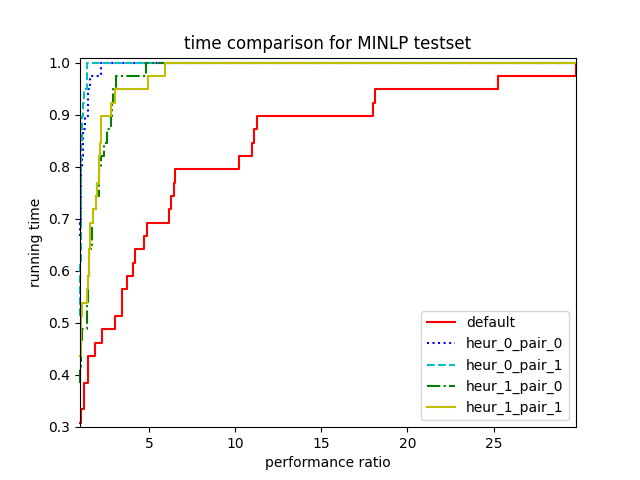}
  }
  \subfigure[OFL\label{fig:oflT}]{%
    \includegraphics[scale=0.45]{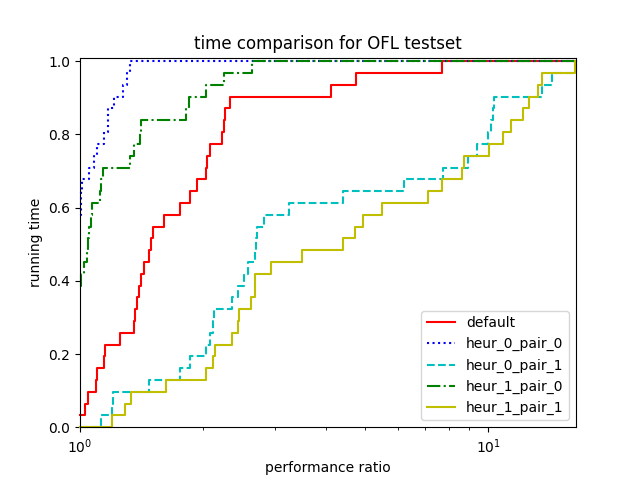}
  }
  \caption{Performance profiles for running times on the MINLP and OFL test set.}
  \label{fig:minlp}
\end{figure}

\paragraph{Results for GEOM.}
The GEOM test set consists of 2- and 3-dimensional versions of the following problems.
Finding the maximum radius such that~$n$ identical spheres fit into a sphere of radius~1, and finding the maximum radius such that the kissing number problem for~$n$ spheres has a solution.
For the former, we adapt the MINLP model~(1) from~\cite{Khajavirad2024} for packing spheres into a box, where~$n \in \{4,\dots,20\}$ when~$\dim = 2$ and~$n \in \{4,\dots,10\}$ when~$\dim = 3$.
For the kissing number problem we use Models~(1)--(5) and~(6) from~\cite{KucherenkoEtAl2007}.
The first model uses~$n \in \{4,\dots,20\}$ when~$\dim = 2$ and~$n \in \{4,\dots,10\}$ when~$\dim = 3$; the second model uses~$n \in \{4,\dots,20\}$ when~$\dim = 2$ and~$n \in \{4,\dots,16\}$ when~$\dim = 3$.

Fig.~\ref{fig:rot1} compares the results of the different settings when rotation symmetries are handled.
As for the other test sets, handling \minDCs greatly improves \scip's performance.
In general, the settings with \emph{heur\_0} dominate those with \emph{heur\_1}.
In contrast to the MINLP test set, however, one cannot benefit from handling pairs of \minDCs.

Fig.~\ref{fig:rotcomp} illustrates the impact of handling rotation
symmetries, where settings whose names start with ``rot\_'' incorporate handling rotation symmetries.
The plot includes all instances that were solved by at least one of the tested settings.
While handling rotation symmetries clearly improves the performance of the default setting, a difference between easy and difficult instances can be observed for the remaining settings.
For the easier instances, symmetry handling yields little benefit and may even introduce some overhead.
In contrast, for the more challenging instances, handling rotation symmetries is crucial and leads to substantially better overall performance.

\begin{figure}[t]
  \centering
  \subfigure[With handling of rotation symmetries.\label{fig:rot1}]{%
    \includegraphics[scale=0.45]{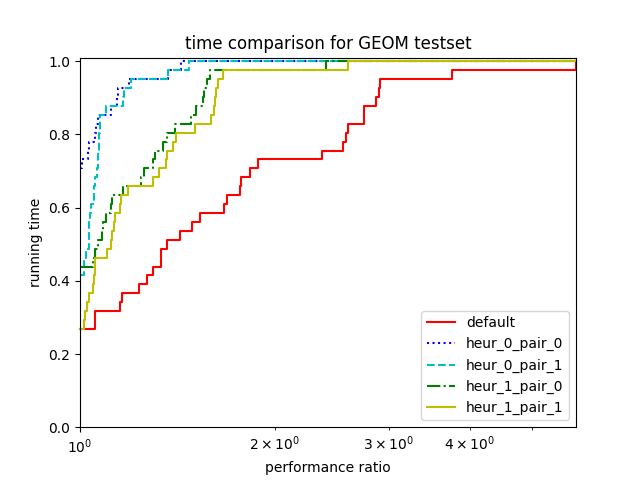}
  }
  \subfigure[Impact of handling rotation symmetries.\label{fig:rotcomp}]{%
    \includegraphics[scale=0.45]{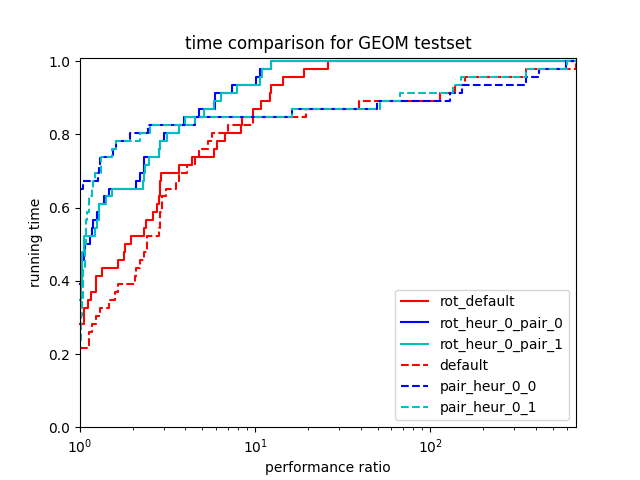}
  }
  \caption{Performance profiles for GEOM test set.}\label{fig:geom}
\end{figure}

Finally, we comment on an observation from~\cite{LocatelliRaber2002}: a domain reduction obtained for one box may trigger further reductions for other boxes, a phenomenon referred to as the \emph{wave effect}.
\scip exploits this effect automatically by repeatedly calling our algorithms until no further reductions are found or a limit on the number of consecutive calls is reached (1000 at the root node and 100 elsewhere in the branch-and-bound tree).
Consequently, all results reported above benefit from the wave effect.

For the GEOM test set, we investigated the impact of disabling these iterative calls.
Interestingly, not exploiting the wave effect led to improved overall running times, particularly for the more computationally expensive settings that include \emph{pair\_1}.
Due to space restrictions, we refer the reader to the online supplement~\cite{supplement} for more detailed figures.
This suggests that, in this setting, it is preferable to progress more quickly through the branch-and-bound tree rather than investing additional effort in obtaining the strongest possible local variable bounds.

\paragraph{Conclusion.}
Our computational study demonstrates that efficiently handling \minDCs is crucial for achieving good performance across all test sets.
Simple and inexpensive domain reduction techniques, in particular the heuristic from Sec.~\ref{sec:algOne}, consistently perform very well.
The latter, however, is slower than the methods from Sec.~\ref{sec:locatelli}.
We stress, however, that the methods from Sec.~\ref{sec:locatelli} are substantially more complex than the methods from Sec.~\ref{sec:algOne};
in particular, their running time depends exponentially on~$\dim$.
The running time of our methods, however, only depends linearly on~$\dim$.
They therefore provide a good trade off between effectiveness and running time, which might be particularly helpful for problems in higher dimensions.

For instances with many \minDCs, such as the OFL test set, lightweight reductions that are not based on pairs of \minDCs are especially beneficial, as they can be applied frequently during the spatial branch-and-bound process without incurring excessive overhead.
Handling rotation symmetries further improves performance on the GEOM instances on harder instances

\paragraph{Acknowledgments.}
We thank Stefan Vigerske for reviewing parts of our code as well as two reviewers for their helpful comments.
LL was partly sponsored by the ANR "Evariste" project ANR-24-CE23-1621.

\bibliographystyle{abbrv}
\bibliography{geometric_problems}

\end{document}